\newtheorem{theorem}{Theorem}[section]
\newtheorem{lemma}[theorem]{Lemma}
\theoremstyle{definition}
\newtheorem{definition}[theorem]{Definition}
\numberwithin{equation}{section}
\begin{document}

\title[The Hake-McShane and Hake-Henstock-Kurzweil integrals]
{The Hake-McShane and Hake-Henstock-Kurzweil integrals over $m$-dimensional unbounded sets}

\author{ Sokol Bush Kaliaj }

\address{
Mathematics Department, 
Science Natural Faculty, 
University of Elbasan,
Elbasan, 
Albania.
}

\email{sokolkaliaj@yahoo.com}

\thanks{}

\subjclass[2010]{Primary 28B05, 46B25; Secondary  46G10.}

\keywords{Hake-Henstock-Kurzweil integral,  locally Henstock-Kurzweil integral,  Hake-McShane integral, 
locally McShane integral, Banach space, $m$-dimensional Euclidean space. }

\begin{abstract} 
In this paper, we extend the Hake-McShane and Hake-Henstock-Kurzweil integrals 
of Banach space valued functions  from $m$-dimensional open and bounded sets to 
$m$-dimensional sets $G$ such that $|G \setminus G^{o}|=0$. 
We will prove the full descriptive characterizations of 
new integrals in terms of the locally McShane and locally Henstock-Kurzweil integrals. 
\end{abstract}

\maketitle

\section{Introduction and Preliminaries}

In this paper, we continue the investigation of characterizations of 
the Hake-McShane and Hake-Henstock-Kurzweil integrals 
in terms of the McShane and Henstock-Kurzweil integrals,   
started in \cite{KAL1}. 
At first, we define the Hake-McShane and Hake-Henstock-Kurzweil integrals 
of Banach space valued functions 
defined on a subset $E \subset \mathbb{R}^{m}$ 
such that $|E \setminus E^{o}|=0$, 
see Definition \ref{D_Hakeint}. 
If $E$ is a bounded set and $E = E^{o}$, 
then Definition \ref{D_Hakeint} is the same with the corresponding  definition in   \cite{KAL1}.
In the paper \cite{KAL1} are proved full descriptive characterizations of 
the Hake-McShane and Hake-Henstock-Kurzweil integrals 
of Banach space valued functions defined on a bounded and open subset $G \subset \mathbb{R}^{m}$ 
in terms of 
the McShane and Henstock-Kurzweil integrals, 
see Theorems 3.1 and 3.2 in \cite{KAL1}.
Here, we will prove full descriptive characterizations of 
the Hake-McShane and Hake-Henstock-Kurzweil integrals 
of Banach space valued functions defined on a subset $G \subset \mathbb{R}^{m}$ 
such that $|G \setminus G^{o}|=0$ 
in terms of the locally McShane and locally Henstock-Kurzweil integrals,  
see Theorems \ref{T2.1} and \ref{T2.2}.

Throughout this paper $X$ denotes a real Banach space with 
the norm $||\cdot||$. 
The Euclidean space $\mathbb{R}^{m}$ is equipped with the maximum norm. 
$B_{m}(t,r)$ denotes the open ball in $\mathbb{R}^{m}$ with center $t$ 
and radius $r > 0$. 
We denote by $\mathcal{L}(\mathbb{R}^{m})$ the $\sigma$-algebra of Lebesgue measurable subsets 
of $\mathbb{R}^{m}$ and by 
$\lambda$ the Lebesgue measure on $\mathcal{L}(\mathbb{R}^{m})$.  
$|A|$ denotes the Lebesgue measure of $A \in \mathcal{L}(\mathbb{R}^{m})$.

The subset $\prod_{j=1}^{m} [a_{j},b_{j}] \subset \mathbb{R}^{m}$ 
is said to be a \textit{closed non-degenerate interval} in $\mathbb{R}^{m}$, 
if $-\infty < a_{j} < b_{j} < +\infty$,  for $j=1, \dotsc, m$.
Two closed non-degenerate intervals $I$ and $J$ in $\mathbb{R}^{m}$ are said to be 
\textit{non-overlapping} if 
$I^{o} \cap J^{o} = \emptyset$, where $I^{o}$ denotes the \textit{interior} of $I$. 
By $\mathcal{I}$ the family  of all closed non-degenerate subintervals in $\mathbb{R}^{m}$ is denoted 
and by $\mathcal{I}_{E}$ the family  of all closed non-degenerate 
subintervals in $E \subset \mathbb{R}^{m}$.   
A  function $F: \mathcal{I}_{E} \to X$  
is said to be an \textit{additive interval function}, 
if for each two non-overlapping intervals $I, J \in \mathcal{I}_{E}$ such that 
$I \cup J \in \mathcal{I}_{E}$, we have
\begin{equation*}
F(I \cup J) = F(I) + F(J).
\end{equation*}
A pair $(t, I)$ of a point $t \in E$  and an interval $I \in \mathcal{I}_{E}$ 
is called an \textit{$\mathcal{M}$-tagged interval} in $E$, $t$ is the tag of $I$. 
Requiring $t \in I$ for the tag of $I$ 
we get the concept of an \textit{$\mathcal{HK}$-tagged interval} in $E$. 
A finite collection $\{ (t_{i}, I_{i}) : i = 1, \dotsc, p \}$ of 
$\mathcal{M}$-tagged intervals ($\mathcal{HK}$-tagged intervals) in $E$ 
is called an  \textit{$\mathcal{M}$-partition} (\textit{$\mathcal{HK}$-partition}) in $E$, 
if $\{ I_{i} : i = 1, \dotsc, p \}$ is a collection of pairwise non-overlapping intervals 
in $\mathcal{I}_{E}$. 
Given $Z \subset E$, a positive function $\delta: Z \to (0,+\infty)$ 
is called a \textit{gauge} on $Z$. 
We say that an 
$\mathcal{M}$-partition ($\mathcal{HK}$-partition) $\pi = \{ (t_{i}, I_{i}) : i = 1, \dotsc, p \}$ 
in $E$ is 
\begin{itemize}
\item
$\mathcal{M}$-partition ($\mathcal{HK}$-partition) of $E$, if $\cup_{i=1}^{p} I_{i} = E$,
\item
$Z$-tagged if $\{t_{1}, \dotsc, t_{p} \} \subset Z$,
\item 
$\delta$-fine if for each $i = 1, \dotsc, p$, we have 
$
I_{i} \subset B_{m}(t_{i},\delta(t_{i})). 
$
\end{itemize}

We now recall the definitions of the McShane and Henstock-Kurzweil integrals of a function 
$f:J \to X$, where $J$ is a fixed interval in $\mathcal{I}$. 
The function $f$ is said to be 
\textit{McShane (Henstock-Kurzweil) integrable} on $J$ 
if there is a vector 
$x_{f} \in X$ such that for every $\varepsilon>0$, 
there exists a gauge $\delta$ on $J$  
such that for every $\delta$-fine $\mathcal{M}$-partition ($\mathcal{HK}$-partition) 
$\pi$ of $J$, we have 
$$
||\sum_{(t,I) \in \pi} f(t)|I| - x_{f}|| < \varepsilon.
$$
In this case, the vector $x_{f}$ is said to be the  
\textit{McShane (Henstock-Kurzweil) integral} of $f$ on $J$ 
and we set $x_{f}=(M)\int_{J} f d \lambda$ ($x_{f}=(HK)\int_{J} f d \lambda$). 
The function $f$ is said to be 
\textit{McShane (Henstock-Kurzweil) integrable} over a subset $A \subset J$, 
if the function $f . \mathbbm{1}_{A} : J \to X$ is McShane (Henstock-Kurzweil) integrable on $J$, 
where $\mathbbm{1}_{A}$ is the characteristic function of the set $A$. 
The McShane (Henstock-Kurzweil) integral of $f$ over $A$ will be denoted by $(M)\int_{A}f d\lambda$ 
($(HK)\int_{A}f d\lambda$).
If $f:J \to X$ is McShane integrable on $J$, then by Theorem 4.1.6 in \cite{SCH1} 
the function $f$ is the McShane integrable on each Lebesgue measurable subset $A \subset J$, 
while by Theorem 3.3.4 in \cite{SCH1}, 
if $f$ is Henstock-Kurzweil integrable on $J$, then $f$  
is the Henstock-Kurzweil integrable on each $I \in \mathcal{I}_{J}$. 
Therefore, we can define an additive interval function 
$F: \mathcal{I}_{J} \to X$ as follows 
$$
F(I) = (M) \int_{I} f d \lambda,~  (~ F(I) = (HK) \int_{I} f d \lambda ~),~
\text{ for all }I \in \mathcal{I}_{J},
$$
which is called the primitive of $f$.

The basic properties of the McShane integral and the Henstock-Kurzweil integral 
can be found in \cite{BON}, \cite{CAO}, \cite{DIP}, \cite{FRE1}-\cite{FRE3}, \cite{GORD1}-\cite{GORD3},  
\cite{GUO1}, \cite{GUO2}, 
\cite{KURZ}, \cite{LEE1}, \cite{LEE2} and \cite{SCH1}. 
We do not present them here. 
The reader is referred to the above mentioned references for the details.

\begin{definition}\label{D_LocInt}
Assume that an open subset $W \subset \mathbb{R}^{m}$,   
a function $f : W \to X$ and an additive interval function $F: \mathcal{I}_{W} \to X$ 
are given. 
For each $J \in \mathcal{I}_{W}$, we denote
$$
f_{J} = f \vert_{J}\text{ and }
F_{J} = F \vert_{\mathcal{I}_{J}}. 
$$ 
The function $f$ is said to be 
\textit{locally McShane (locally Henstock-Kurzweil) integrable} on $W$ with the primitive $F$, 
if for each $J \in \mathcal{I}_{W}$,  $f_{J} = f \vert_{J}$ 
is McShane (Henstock-Kurzweil) integrable on $J$ 
with the primitive 
$F_{J} = F \vert_{\mathcal{I}_{J}}$.
\end{definition}

We now fix a subset $E \subset \mathbb{R}^{m}$ such that 
$|E \setminus E^{o}| = 0$, where $E^{o}$ is the interior of $E$.   
A sequence $(I_{k})$ of pairwise non-overlapping intervals in $\mathcal{I}_{E}$ is said to be a 
\textit{division of} $E^{o}$, if  
$$
E^{o} = \bigcup_{k=1}^{+\infty} I_{k}.
$$   
We denote by $\mathscr{D}_{E^{o}}$ the family of all divisions of $E^{o}$.   
By Lemma 2.43 in \cite{FOLL}, the family $\mathscr{D}_{E^{o}}$ is not empty.

\begin{definition}\label{D_Hake}
An additive interval function  $F : \mathcal{I}_{E} \to X$  
is said to be a \textit{Hake-function}, 
if given a division $(I_{k}) \in \mathscr{D}_{E^{o}}$, we have
\begin{itemize}
\item
the series 
$$
\sum_{k: |I \cap I_{k}| > 0 } F(I \cap I_{k})
$$ 
is unconditionally convergent in $X$, 
for each $I \in \mathcal{I}$,
\item
the equality 
$$
F(I) = \sum_{k: |I \cap I_{k}| > 0 } F(I \cap I_{k}),
$$ 
holds for all $I \in \mathcal{I}_{E}$. 
\end{itemize} 
\end{definition}

\begin{definition}\label{D_NegV}
We say that the additive interval function $F : \mathcal{I}_{E} \to X$ 
has \textit{$\mathcal{M}$-negligible ($\mathcal{HK}$-negligible) variation over} 
a subset $Z \subset \mathbb{R}^{m}$, 
if for each $\varepsilon >0$ there exists a gauge 
$\delta_{v}$ on $Z$ such that 
for each $Z$-tagged $\delta_{v}$-fine $\mathcal{M}$-partition ($\mathcal{HK}$-partition) 
$\pi_{v}$ in $\mathbb{R}^{m}$, we have
\begin{itemize}
\item
the series 
$$
\sum_{k: |I \cap I_{k}| > 0 } F(I \cap I_{k})
$$ 
is unconditionally convergent in $X$, for each $(t,I) \in \pi_{v}$, 
\item
the inequality 
$$
||\sum_{(t,I) \in \pi_{v}} 
\left (
\sum_{k: |I \cap I_{k}| > 0 } F(I \cap I_{k})
\right ) 
|| < \varepsilon,
$$
holds, 
\end{itemize}
whenever $(I_{k}) \in \mathscr{D}_{E^{o}}$. 
We say that $F$ has  
\textit{$\mathcal{M}$-negligible ($\mathcal{HK}$-negligible) variation outside} of $E^{o}$ 
if $F$ has $\mathcal{M}$-negligible ($\mathcal{HK}$-negligible) variation 
over $(E^{o})^{c} = \mathbb{R}^{m} \setminus E^{o}$.
\end{definition}

	%****************************%
	%---HMcShane Integral on G---%
	%****************************%

\begin{definition}\label{D_Hakeint}
We say that a function $f: E \to X$ is \textit{Hake-McShane (Hake-Henstock-Kurzweil) integrable} 
on $E$ with 
the primitive $F: \mathcal{I}_{E} \to X$,  
if we have
\begin{itemize}
\item
for each $\varepsilon >0$ there exists a gauge $\delta_{\varepsilon}$ on $E^{o}$ such that  
for each $\delta_{\varepsilon}$-fine $\mathcal{M}$-partition ($\mathcal{HK}$-partition) $\pi$ in 
$E^{o}$, 
we have
$$
|| \sum_{(t,I) \in \pi } 
(~ f(t)|I| - F(I)~) || < \varepsilon,   
$$
\item
$F$ is a Hake-function,
\item
$F$ has $\mathcal{M}$-negligible ($\mathcal{HK}$-negligible) variation outside of $E^{o}$.
\end{itemize}
\end{definition}
Clearly, if $E$ is a bounded set and $E = E^{o}$, then 
Definition \ref{D_Hakeint} is the same with the corresponding definition in \cite{KAL1}.

\section{The Main results}

From now on $G$ will be a subset of  $\mathbb{R}^{m}$ such that 
$G^{o} \neq \emptyset$ and $|G \setminus G^{o}|=0$. 
The main results are Theorems \ref{T2.1} and \ref{T2.2}. 
Let us start with a few auxiliary lemmas.

\begin{lemma}\label{L2.1}
Let $f: G \to X$ be a function and let $J \in \mathcal{I}$. 
Then, given $\varepsilon >0$ there exists a gauge $\delta$ on $Z = J \cap (G \setminus G^{o})$ 
such that for each $\delta$-fine $Z$-tagged $\mathcal{M}$-partition $\pi$ in $J$, we have
$$
|| \sum_{(t,I) \in \pi} f(t)|I| ~|| < \varepsilon.
$$ 
\end{lemma}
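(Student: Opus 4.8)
The plan is to exploit the fact that $Z = J \cap (G \setminus G^{o})$ is Lebesgue-null, so that a finite family of pairwise non-overlapping intervals whose tags lie in $Z$ can be forced to have arbitrarily small total measure simply by requiring those intervals to sit inside a small open neighbourhood of $Z$. Since $f$ need not be bounded, the first step is to split $Z$ according to the size of $\|f\|$.

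More precisely, for each positive integer $n$ I would set
$$
Z_{n} = \{ t \in Z : n-1 \le \|f(t)\| < n \},
$$
so that $Z = \bigcup_{n=1}^{\infty} Z_{n}$ is a disjoint decomposition and $|Z_{n}| = 0$ for every $n$, because $Z \subseteq G \setminus G^{o}$ and $|G \setminus G^{o}| = 0$. (If $Z = \emptyset$ the assertion is trivial, so assume $Z \neq \emptyset$.) By the definition of Lebesgue outer measure, for each $n$ there is an open set $U_{n} \supseteq Z_{n}$ with $|U_{n}| < \varepsilon / (n 2^{n+1})$. The gauge $\delta$ on $Z$ is then defined by choosing, for each $t \in Z_{n}$, a number $\delta(t) > 0$ small enough that $B_{m}(t,\delta(t)) \subseteq U_{n}$.

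Next, given a $\delta$-fine $Z$-tagged $\mathcal{M}$-partition $\pi = \{ (t_{i},I_{i}) : i = 1, \dotsc, p \}$ in $J$, I would group its members by the index $n$ with $t_{i} \in Z_{n}$, writing $A_{n} = \{ i : t_{i} \in Z_{n} \}$. For a fixed $n$ the intervals $\{ I_{i} : i \in A_{n} \}$ are pairwise non-overlapping and each satisfies $I_{i} \subseteq B_{m}(t_{i},\delta(t_{i})) \subseteq U_{n}$, hence $\sum_{i \in A_{n}} |I_{i}| \le |U_{n}|$. Since $\|f(t_{i})\| < n$ for $i \in A_{n}$, this gives
$$
\Bigl\| \sum_{(t,I) \in \pi} f(t)|I| \Bigr\|
\le \sum_{n=1}^{\infty} \sum_{i \in A_{n}} \|f(t_{i})\| \, |I_{i}|
\le \sum_{n=1}^{\infty} n \, |U_{n}|
< \sum_{n=1}^{\infty} \frac{\varepsilon}{2^{n+1}}
= \frac{\varepsilon}{2} < \varepsilon ,
$$
which is exactly the required estimate; all sums over $n$ are in fact finite because $\pi$ is a finite collection.

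I do not expect a genuine obstacle here: this is the familiar argument showing that the elementary McShane sums of an arbitrary function over a null set can be made negligible by a suitable gauge. The only points that need a line of care are the measurability‑free covering of each $Z_{n}$ (outer measure suffices), the trivial case $Z = \emptyset$, and the remark that every rearrangement of the sums above is legitimate since an $\mathcal{M}$-partition is finite by definition.
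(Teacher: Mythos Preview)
Your proof is correct. The paper takes a slightly different, less self-contained route: it defines the auxiliary function $g_{J}:J\to X$ equal to $f$ on $Z$ and $0$ elsewhere, invokes Theorem~3.3.1 of \cite{SCH1} to conclude that $g_{J}$ is McShane integrable on $J$ with $(M)\int_{I}g_{J}\,d\lambda=0$ for every $I\in\mathcal{I}_{J}$ (since $g_{J}=0$ a.e.), and then applies the Saks--Henstock Lemma~3.4.2 of \cite{SCH1} to obtain the gauge $\delta$ on $Z$. Your argument instead constructs the gauge explicitly via the standard level-set decomposition $Z_{n}=\{t\in Z: n-1\le\|f(t)\|<n\}$ covered by small open sets, which is precisely what underlies the proof of Theorem~3.3.1 in \cite{SCH1} and is, in fact, the very technique the paper itself uses later in the proof of Lemma~\ref{L2.5}. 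So your approach is more elementary and keeps the argument inside the paper, while the paper's version is shorter on the page at the cost of two external citations; mathematically the two are the same idea packaged differently.
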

\begin{proof}
Define a function $g_{J} : J \to X$ as follows
\begin{equation*}
g_{J}(t) = 
\left \{
\begin{array}{ll}
f(t) & \text{ if }t \in Z \\
0 & \text{ otherwise }. 
\end{array}
\right.
\end{equation*} 
Then, by Theorem 3.3.1 in \cite{SCH1}, 
$g_{J}$ is McShane  integrable on $J$ and
$$
(M)\int_{I} g_{J} d \lambda = 0, \text{ for all }I \in \mathcal{I}_{J}. 
$$
Therefore, by Lemma 3.4.2 in \cite{SCH1}, 
given $\varepsilon >0$ there exists a gauge $\delta$ on $Z$  
such that for each $\delta$-fine $Z$-tagged $\mathcal{M}$-partition $\pi$ in $J$ we have
$$
|| \sum_{(t,I) \in \pi} g_{J}(t)|I| ~|| < \varepsilon,
$$ 
and since 
$g_{J}(t) = f(t)$ for all $t \in Z$, the last result proves the lemma.
\end{proof}

\begin{lemma}\label{L2.2}
Let $f: G \to X$ be a function, 
let $F: \mathcal{I}_{G} \to X$ be an additive interval function and let $J \in \mathcal{I}$.  
If $F$ has $\mathcal{M}$-negligible variation outside of $G^{o}$, then
given $\varepsilon >0$ there exists a gauge $\delta$ on $Z = J \cap (G \setminus G^{o})$ 
such that for each $\delta$-fine $Z$-tagged $\mathcal{M}$-partition $\pi$ in $J$ we have
$$
|| \sum_{(t,I) \in \pi} 
\left ( ~f(t)|I| - \sum_{k:|I \cap I_{k}| >0} F(I \cap I_{k})~ \right ) 
|| < \varepsilon,
$$ 
whenever $(I_{k}) \in \mathscr{D}_{G^{o}}$. 
\end{lemma}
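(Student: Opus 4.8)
The plan is to split the inner difference into the two sums $\sum_{(t,I)\in\pi}f(t)|I|$ and $\sum_{(t,I)\in\pi}\sum_{k:|I\cap I_{k}|>0}F(I\cap I_{k})$, to control the first by Lemma \ref{L2.1} and the second by the definition of $\mathcal{M}$-negligible variation outside of $G^{o}$, and then to intersect the two gauges so produced.

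First I would apply Lemma \ref{L2.1} with $\varepsilon/2$ in place of $\varepsilon$, obtaining a gauge $\delta_{1}$ on $Z=J\cap(G\setminus G^{o})$ with $||\sum_{(t,I)\in\pi}f(t)|I|~||<\varepsilon/2$ for every $\delta_{1}$-fine $Z$-tagged $\mathcal{M}$-partition $\pi$ in $J$. Since $F$ has $\mathcal{M}$-negligible variation outside of $G^{o}$, i.e.\ over $(G^{o})^{c}=\mathbb{R}^{m}\setminus G^{o}$, Definition \ref{D_NegV} applied with $\varepsilon/2$ supplies a gauge $\delta_{v}$ on $(G^{o})^{c}$ such that, for every $(G^{o})^{c}$-tagged $\delta_{v}$-fine $\mathcal{M}$-partition $\pi_{v}$ in $\mathbb{R}^{m}$ and every $(I_{k})\in\mathscr{D}_{G^{o}}$, each series $\sum_{k:|I\cap I_{k}|>0}F(I\cap I_{k})$ with $(t,I)\in\pi_{v}$ is unconditionally convergent and $||\sum_{(t,I)\in\pi_{v}}\big(\sum_{k:|I\cap I_{k}|>0}F(I\cap I_{k})\big)||<\varepsilon/2$.

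Finally I would put $\delta(t)=\min\{\delta_{1}(t),\delta_{v}(t)\}$ for $t\in Z$, which makes sense because $Z\subset G\setminus G^{o}\subset(G^{o})^{c}$, so $\delta_{v}$ is defined on $Z$. Any $\delta$-fine $Z$-tagged $\mathcal{M}$-partition $\pi$ in $J$ is then at once a $\delta_{1}$-fine $Z$-tagged $\mathcal{M}$-partition in $J$ and, because $Z\subset(G^{o})^{c}$ and $J\subset\mathbb{R}^{m}$, a $(G^{o})^{c}$-tagged $\delta_{v}$-fine $\mathcal{M}$-partition in $\mathbb{R}^{m}$; hence both of the above estimates apply to it, and for every $(I_{k})\in\mathscr{D}_{G^{o}}$ the triangle inequality gives $||\sum_{(t,I)\in\pi}\big(f(t)|I|-\sum_{k:|I\cap I_{k}|>0}F(I\cap I_{k})\big)||\le||\sum_{(t,I)\in\pi}f(t)|I|~||+||\sum_{(t,I)\in\pi}\big(\sum_{k:|I\cap I_{k}|>0}F(I\cap I_{k})\big)||<\varepsilon$.

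I do not anticipate a genuine obstacle: the lemma is essentially a bookkeeping of gauges. The one point deserving a moment's care is the check that a $\delta$-fine $Z$-tagged $\mathcal{M}$-partition in $J$ is an admissible partition for both Lemma \ref{L2.1} and Definition \ref{D_NegV}, and this is immediate once one notes that the partitions occurring in Definition \ref{D_NegV} are permitted to lie anywhere in $\mathbb{R}^{m}$, with their tags only required to belong to the ambient set $(G^{o})^{c}$.
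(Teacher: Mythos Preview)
Your proposal is correct and follows essentially the same approach as the paper: split via the triangle inequality, control the $f(t)|I|$ sum by Lemma~\ref{L2.1} with $\varepsilon/2$, control the $F$-sum by the $\mathcal{M}$-negligible variation hypothesis with $\varepsilon/2$, and take the minimum of the two gauges on $Z$. The only cosmetic difference is the order in which the two gauges are produced.
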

\begin{proof}
Since $F$ has $\mathcal{M}$-negligible variation  
outside of $G^{o}$, 
given $\varepsilon >0$ there exists a gauge $\delta_{v}$ 
on $(G^{o})^{c}$  
such that for each $\delta_{v}$-fine $(G^{o})^{c}$-tagged $\mathcal{M}$-partition 
$\pi_{v}$ in $J$, 
we have
$$
|| \sum_{(t,I) \in \pi_{v}} 
\left ( \sum_{k:|I \cap I_{k}| >0} F(I \cap I_{k})~ \right ) 
|| < \frac{\varepsilon}{2},
$$ 
whenever $(I_{k}) \in \mathscr{D}_{G^{o}}$. 

By Lemma \ref{L2.1}, 
there exists a gauge $\delta_{0}$ on $Z$ 
such that for each $\delta_{0}$-fine $Z$-tagged $\mathcal{M}$-partition 
$\pi$ in $J$, we have
$$
|| \sum_{(t,I) \in \pi} f(t)|I| ~|| < \frac{\varepsilon}{2}.
$$ 
Define a gauge $\delta$ on $Z$ by $\delta(t) = \min\{\delta_{v}(t), \delta_{0}(t)\}$ for all $t \in Z$. 
Let $\pi$ be a $\delta$-fine $Z$-tagged $\mathcal{M}$-partition $\pi$ in $J$. 
Then,
\begin{equation*}
\begin{split}
|| \sum_{(t,I) \in \pi} 
\left ( ~f(t)|I| - \sum_{k:|I \cap I_{k}| >0} F(I \cap I_{k})~ \right ) 
||
& \leq	
|| \sum_{(t,I) \in \pi} f(t)|I| ~|| \\
&+
|| \sum_{(t,I) \in \pi} 
\left (\sum_{k:|I \cap I_{k}| >0} F(I \cap I_{k})~ \right ) 
|| < \frac{\varepsilon}{2} + \frac{\varepsilon}{2} = \varepsilon,
\end{split}
\end{equation*}
and this ends the proof.
\end{proof}

The next lemma can be proved in the same way as Lemma \ref{L2.2}. 

\begin{lemma}\label{L2.3}
Let $f: G \to X$ be a function, 
let $F: \mathcal{I}_{G} \to X$ be an additive interval function and let $J \in \mathcal{I}$.  
If $F$ has $\mathcal{HK}$-negligible variation outside of $G^{o}$, then
given $\varepsilon >0$ there exists a gauge $\delta$ on $Z = J \cap (G \setminus G^{o})$ 
such that for each $\delta$-fine $Z$-tagged $\mathcal{HK}$-partition $\pi$ in $J$ we have
$$
|| \sum_{(t,I) \in \pi} 
\left ( ~f(t)|I| - \sum_{k:|I \cap I_{k}| >0} F(I \cap I_{k})~ \right ) 
|| < \varepsilon,
$$ 
whenever $(I_{k}) \in \mathscr{D}_{G^{o}}$. 
\end{lemma}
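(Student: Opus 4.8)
The plan is to copy the proof of Lemma~\ref{L2.2} almost word for word, replacing $\mathcal{M}$-partitions by $\mathcal{HK}$-partitions throughout and using the $\mathcal{HK}$-negligible variation hypothesis in place of the $\mathcal{M}$-negligible one. Concretely, I would first fix $\varepsilon>0$ and, invoking Definition~\ref{D_NegV} for the $\mathcal{HK}$-case, produce a gauge $\delta_{v}$ on $(G^{o})^{c}$ such that for every $\delta_{v}$-fine $(G^{o})^{c}$-tagged $\mathcal{HK}$-partition $\pi_{v}$ in $J$ the series $\sum_{k:|I\cap I_{k}|>0}F(I\cap I_{k})$ is unconditionally convergent for each $(t,I)\in\pi_{v}$ and $|| \sum_{(t,I)\in\pi_{v}} ( \sum_{k:|I\cap I_{k}|>0}F(I\cap I_{k}) ) || < \varepsilon/2$, whenever $(I_{k})\in\mathscr{D}_{G^{o}}$.

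Next I would apply Lemma~\ref{L2.1} to get a gauge $\delta_{0}$ on $Z=J\cap(G\setminus G^{o})$ with $|| \sum_{(t,I)\in\pi} f(t)|I| ~|| < \varepsilon/2$ for every $\delta_{0}$-fine $Z$-tagged $\mathcal{M}$-partition $\pi$ in $J$; here the only thing to remark is that every $\mathcal{HK}$-partition is in particular an $\mathcal{M}$-partition, since requiring the tag to lie in its own interval is merely an additional constraint, so the same estimate holds for all $\delta_{0}$-fine $Z$-tagged $\mathcal{HK}$-partitions in $J$ and Lemma~\ref{L2.1} can be used verbatim rather than restated for the Henstock-Kurzweil tagging. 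I would also note that $Z\subset G\setminus G^{o}=G\cap(G^{o})^{c}\subset(G^{o})^{c}$, so $\delta_{v}$ restricts to a gauge on $Z$ and every $Z$-tagged partition in $J$ is automatically $(G^{o})^{c}$-tagged.

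Finally I would set $\delta(t)=\min\{\delta_{v}(t),\delta_{0}(t)\}$ for $t\in Z$ and, given a $\delta$-fine $Z$-tagged $\mathcal{HK}$-partition $\pi$ in $J$ and any $(I_{k})\in\mathscr{D}_{G^{o}}$, split $|| \sum_{(t,I)\in\pi} ( f(t)|I| - \sum_{k:|I\cap I_{k}|>0}F(I\cap I_{k}) ) ||$ by the triangle inequality into $|| \sum_{(t,I)\in\pi} f(t)|I| ~||$ plus $|| \sum_{(t,I)\in\pi} ( \sum_{k:|I\cap I_{k}|>0}F(I\cap I_{k}) ) ||$, each of which is $<\varepsilon/2$ by the two steps above, giving the desired bound $\varepsilon$. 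I do not anticipate any genuine obstacle: the argument is entirely parallel to Lemma~\ref{L2.2}, and the single point that deserves explicit mention is the inclusion of the class of $\mathcal{HK}$-partitions inside the class of $\mathcal{M}$-partitions, which is precisely what permits Lemma~\ref{L2.1} to be reused without change.
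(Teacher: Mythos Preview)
Your proposal is correct and matches exactly what the paper intends: it states that Lemma~\ref{L2.3} ``can be proved in the same way as Lemma~\ref{L2.2},'' i.e., by rerunning the argument with $\mathcal{HK}$-partitions in place of $\mathcal{M}$-partitions. Your explicit remark that every $\mathcal{HK}$-partition is an $\mathcal{M}$-partition (so Lemma~\ref{L2.1} applies without modification) and that $Z\subset(G^{o})^{c}$ (so $\delta_{v}$ restricts to $Z$) are the only points needing comment, and you have handled both correctly.
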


\begin{lemma}\label{L2.4} 
Let $W$ be an open subset of $\mathbb{R}^{m}$, 
let $v: W \to X$ be a function and let $V: \mathcal{I}_{W} \to X$ be an additive interval function. 
Then, the following statements are equivalent:
\begin{itemize}
\item[(i)]
$v$ is locally McShane integrable on $W$ with the primitive $V$,
\item[(ii)] 
for each $\varepsilon >0$ there exists a gauge $\delta_{\varepsilon}$ on $W$ such that  
for each $\delta_{\varepsilon}$-fine $\mathcal{M}$-partition $\pi$ in 
$W$, 
we have
$$
|| \sum_{(t,I) \in \pi } 
(~ v(t)|I| - V(I)~) || < \varepsilon. 
$$
\end{itemize}
\end{lemma}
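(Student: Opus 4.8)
I would begin with the easy direction \textbf{(ii)$\Rightarrow$(i)}. Fix $J\in\mathcal I_W$ and $\varepsilon>0$. Since $I\subset J\subset W$ for every $I\in\mathcal I_J$, every $\mathcal M$-partition of such an $I$ is also an $\mathcal M$-partition in $W$, so the gauge $\delta_\varepsilon$ supplied by (ii) restricts to a gauge on $I$ for which $\|\sum_{(t,K)\in\pi}(v(t)|K|-V(K))\|<\varepsilon$ holds for every $\delta_\varepsilon$-fine $\mathcal M$-partition $\pi$ of $I$; additivity of $V$ gives $\sum_{(t,K)\in\pi}V(K)=V(I)$ for partitions \emph{of} $I$, whence $\|\sum_{(t,K)\in\pi}v(t)|K|-V(I)\|<\varepsilon$. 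Thus $v|_I$ is McShane integrable on $I$ with integral $V(I)$ for each $I\in\mathcal I_J$; equivalently, $v|_J$ is McShane integrable on $J$ with primitive $V|_{\mathcal I_J}$. As $J\in\mathcal I_W$ was arbitrary, (i) follows.

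For \textbf{(i)$\Rightarrow$(ii)} the plan is to localise and then glue. Fix $\varepsilon>0$. First I would choose an increasing sequence $H_1\subset H_2\subset\cdots$ of finite unions of closed intervals contained in $W$ with $H_n\subset (H_{n+1})^{o}$ and $\bigcup_n H_n=W$ (obtained in the standard way: take compact sets $K_n\nearrow W$ with $K_n\subset (K_{n+1})^{o}$ and cover each $K_n$ by finitely many small closed cubes lying in $(K_{n+1})^{o}$). By (i), $v$ is McShane integrable, with primitive $V$, on each of the finitely many intervals making up $H_n$, hence on the elementary set $H_n$; the Saks--Henstock lemma for the McShane integral (cf.\ \cite{SCH1}) then provides, for each $n$, a gauge $\eta_n$ on $H_n$ with
$$\Big\|\sum_{(t,I)\in\pi}\bigl(v(t)|I|-V(I)\bigr)\Big\|<\frac{\varepsilon}{2^{n}}$$
for every $\eta_n$-fine $\mathcal M$-partition $\pi$ in $H_n$. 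Passing from intervals to the elementary set $H_n$ costs only a routine combinatorial step: subdivide each tagged interval of $\pi$ along the boundaries of the constituent intervals of $H_n$, having first shrunk $\eta_n(t)$ below the distance from $t$ to those constituents not containing it, so that this subdivision involves only constituents that contain the tag.

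The heart of the proof is the gluing, and I expect it to be the main obstacle: a single gauge on $W$ must be fixed in advance and yet must dominate, near each point, the appropriate one of infinitely many mutually incompatible gauges $\eta_n$ — one cannot use $\inf_n\eta_n$ (it may vanish) nor a single index $N$ valid for an entire partition. The remedy is a shell index. For $t\in W$ set $n(t)=\min\{n\ge1:t\in H_n\}$ (with $H_0=\emptyset$); since $t\in H_{n(t)}\subset(H_{n(t)+1})^{o}$ and the latter set is open, $\delta_\varepsilon(t):=\min\{\eta_{n(t)+1}(t),\ \tfrac12\operatorname{dist}(t,\mathbb R^m\setminus(H_{n(t)+1})^{o})\}$ is strictly positive, so $\delta_\varepsilon$ is a gauge on $W$. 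If $\pi$ is a $\delta_\varepsilon$-fine $\mathcal M$-partition in $W$, then each $(t,I)\in\pi$ satisfies $I\subset B_m(t,\delta_\varepsilon(t))\subset(H_{n(t)+1})^{o}\subset H_{n(t)+1}$, with $t\in H_{n(t)}\subset H_{n(t)+1}$ and $\delta_\varepsilon(t)\le\eta_{n(t)+1}(t)$; splitting $\pi$ according to the value of $n(t)$ into finitely many nonempty blocks $\pi_n$, each $\pi_n$ is an $\eta_{n+1}$-fine $\mathcal M$-partition in $H_{n+1}$. Summing the $\eta_{n+1}$-estimates over $n$ then gives $\|\sum_{(t,I)\in\pi}(v(t)|I|-V(I))\|\le\sum_{n\ge1}\varepsilon/2^{n+1}<\varepsilon$, which is (ii). Apart from the Saks--Henstock reduction to elementary sets, everything here is bookkeeping.
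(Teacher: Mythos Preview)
Your proof is correct. The direction (ii)$\Rightarrow$(i) is essentially identical to the paper's argument.

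For (i)$\Rightarrow$(ii) you take a genuinely different route. The paper fixes a single Whitney-type division $(C_k)$ of $W$ into countably many pairwise non-overlapping intervals, applies the Saks--Henstock lemma on each $C_k$ with tolerance $\varepsilon/2^{k+1}$, and glues in one step: the gauge at $t$ is $\delta_{i_0}(t)$ if $t$ lies in the interior of some $C_{i_0}$, and $\min_{j\in\mathcal N_t}\delta_j(t)$ if $t$ lies on the boundary of the finitely many $C_j$ meeting at $t$; an arbitrary $\delta_\varepsilon$-fine $\mathcal M$-partition in $W$ is then split into interior-tagged and boundary-tagged parts, the latter handled by chopping each interval along the faces of the adjacent $C_j$'s. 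Your argument instead builds a nested exhaustion $H_1\subset H_2\subset\cdots$ of $W$ by elementary sets with $H_n\subset(H_{n+1})^o$, first reduces Saks--Henstock from intervals to each elementary set $H_n$ (the ``routine combinatorial step''), and then glues across $n$ via the shell index $n(t)$ and the gauge $\delta_\varepsilon(t)=\min\{\eta_{n(t)+1}(t),\tfrac12\operatorname{dist}(t,\mathbb R^m\setminus(H_{n(t)+1})^o)\}$. The paper's single-division approach is more economical---it avoids the two-level structure and uses only the Saks--Henstock lemma on genuine intervals---while your exhaustion-plus-shell-index device is a standard and reusable pattern that works without invoking a Whitney decomposition. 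Two minor points worth making explicit in your write-up: the cubes forming each $H_n$ should be taken (or refined to be) pairwise non-overlapping so that the boundary-subdivision step goes through cleanly, and the inclusion $H_n\subset(H_{n+1})^o$ follows from $H_n\subset(K_{n+1})^o$ together with monotonicity of the interior and $K_{n+1}\subset H_{n+1}$.
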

\begin{proof}
$(i) \Rightarrow (ii)$ 
Assume that $v$ is locally McShane integrable on $W$ with the primitive $V$. 
Fix a division $(C_{k})$ of $W$ and let $\varepsilon >0$ be given. 
For each $k \in \mathbb{N}$, 
we denote 
$$ 
v_{k} = v \vert_{C_{k}} \text{ and } V_{k} = V \vert_{\mathcal{I}_{C_{k}}}.
$$ 
By hypothesis each function $v_{k}$ is McShane integrable on $C_{k}$ 
with the primitive $V_{k}$. 
Hence, by Lemma 3.4.2 in \cite{SCH1}, 
there exists a gauge $\delta_{k}$ on $C_{k}$ such that 
for each $\delta_{k}$-fine $\mathcal{M}$-partition $\pi_{k}$ in $C_{k}$, we have
\begin{equation}\label{eq_SaksHen}
||\sum_{(t,I) \in \pi_{k}} (~ v_{k}(t)|I| - V_{k}(I)~) || \leq 
\frac{1}{2^{k}}\frac{\varepsilon}{2}.  
\end{equation}

Note that for any $t \in  W = \cup_{k} C_{k}$, we have the following possible cases:
\begin{itemize}
\item
there exists $i_{0} \in \mathbb{N}$ such that $t \in (C_{i_{0}})^{o}$, 
\item
there exists $j_{0} \in \mathbb{N}$ such that $t \in C_{j_{0}} \setminus (C_{j_{0}})^{o}$. 
In this case, 
there exists a finite set 
$\mathcal{N}_{t} = \{j \in \mathbb{N}: t \in C_{j} \setminus (C_{j})^{o} \}$ such that 
$t \in \bigcap_{j \in \mathcal{N}_{t}} C_{j}$ and $t \notin C_{k}$,  
for all $k \in \mathbb{N} \setminus \mathcal{N}_{t}$. 
Hence, $t \in ( \cup_{j \in \mathcal{N}_{t}} C_{j} )^{o}$. 
\end{itemize} 
For each $k \in \mathbb{N}$, we can choose $\delta_{k}$ so that for any $t \in W$, we have
$$
t \in (C_{k})^{0} \Rightarrow B_{m}(t, \delta_{k}(t)) \subset C_{k}
$$
and
$$
t \in C_{k} \setminus (C_{k})^{o} \Rightarrow  
B_{m}(t, \delta_{k}(t)) \subset \bigcup_{j \in \mathcal{N}_{t}} C_{j}.
$$
Define a gauge $\delta_{\varepsilon} : W \to (0,+\infty)$ as follows.  
For each $t \in W$, we choose 
$$
\delta_{\varepsilon}(t) 
= 
\left \{
\begin{array}{ll}
\delta_{i_{0}}(t) & \text{ if }t \in (C_{i_{0}})^{o}  \\
\min \{ \delta_{j}(t): j \in \mathcal{N}_{t} \} & \text{ otherwise}.
\end{array}
\right.
$$
Let $\pi$ be an arbitrary $\delta_{\varepsilon}$-fine $\mathcal{M}$-partition in $W$. 
Then, $\pi = \pi_{1} \cup \pi_{2}$, where 
\begin{equation*}
\begin{split}
\pi_{1} 
&= \{ (t,I) \in \pi : (\exists i_{0} \in \mathbb{N})[t \in (C_{i_{0}})^{o}] \} \\
\pi_{2} 
&= \{(t,I) \in \pi : 
(\exists j_{0} \in \mathbb{N})[t \in C_{j_{0}} \setminus (C_{j_{0}})^{o}] \}.
\end{split}
\end{equation*}
Hence,    
\begin{equation}\label{eq_SS.1}
\begin{split}
||\sum_{(t,I) \in \pi} (~v(t)|I| - V(I)~) || 
\leq&
||\sum_{(t,I) \in \pi_{1}} (~v(t)|I| - V(I)~) || \\
+& 
||\sum_{(t,I) \in \pi_{2}} (~v(t)|I| - V(I)~) || 
\end{split}
\end{equation} 
Note that, if we define
\begin{equation*}
\begin{split}
\pi_{1}^{k} &= \{( t,I) \in \pi_{1}: t \in (C_{k})^{o} \}, \\ 
\pi_{2}^{k} &= \{ (t, I \cap C_{k}) : (t,I) \in \pi_{2}, 
t \in C_{k} \setminus (C_{k})^{o}, |I \cap C_{k}|>0 \},
\end{split}
\end{equation*}
then $\pi_{1}^{k}$ and $\pi_{2}^{k}$ are $\delta_{k}$-fine $\mathcal{M}$-partitions in $C_{k}$. 
Therefore, by \eqref{eq_SaksHen}, it follows that
\begin{equation*}
\begin{split}
||\sum_{(t,I) \in \pi_{1}} (~v(t)|I| - V(I)~) ||   
&=
||\sum_{k} 
\left (
\sum_{\underset{t \in (C_{k})^{0}}{(t,I) \in \pi_{1}}}  
(~v(t)|I| - V(I)~)
\right )
|| \\
&\leq
\sum_{k} ||
\left ( \sum_{(t,I) \in \pi_{1}^{k}} 
(~v_{k}(t)|I| - V_{k}(I)~) 
\right )
|| \\
&\leq
\sum_{k=1}^{+\infty} \frac{1}{2^{k}}\frac{\varepsilon}{2} = \frac{\varepsilon}{2}
\end{split}
\end{equation*}
and  
\begin{equation*}
\begin{split}
||\sum_{(t,I) \in \pi_{2}} (~v(t)|I| - V(I)~) || 
&= 
||\sum_{(t,I) \in \pi_{2}} 
\left ( \sum_{\underset{| I \cap C_{j}| >0}{j \in \mathcal{N}_{t}}} 
(~ v(t) . | I \cap C_{j}| - V(I \cap C_{j}) ~) \right ) || \\
=& 
||\sum_{(t,I) \in \pi_{2}} 
\left ( \sum_{\underset{| I \cap C_{j}| >0}{j \in \mathcal{N}_{t}}}
(~v_{j}(t) . | I \cap C_{j}| - V_{j}(I \cap C_{j})~ ) \right ) || \\
=&
||\sum_{k} 
\left ( 
\sum_{(t,I \cap C_{k}) \in \pi_{2}^{k}} 
(~v_{k}(t) . |I \cap C_{k}| - V_{k}(I \cap C_{k})~)  
\right ) ||\\
\leq&
\sum_{k} 
||
\left ( 
\sum_{(t, I \cap C_{k}) \in \pi_{2}^{k}} 
(~v_{k}(t) . | I \cap C_{k} | - V_{k}( I \cap C_{k} )~)  
\right ) || \\
\leq&
\sum_{k=1}^{+\infty} \frac{1}{2^{k}}\frac{\varepsilon}{2} = \frac{\varepsilon}{2}.
\end{split}
\end{equation*}
The last results together with \eqref{eq_SS.1} yield
$$
||\sum_{(t,I) \in \pi} (~v(t) . |I| - V(I)~)|| < \varepsilon.
$$

$(ii) \Rightarrow (i)$ 
Assume that $(ii)$ holds.   
Then, given $\varepsilon >0$ there exists a gauge $\delta_{\varepsilon}$ on $W$ such that 
for each $\delta_{\varepsilon}$-fine $\mathcal{M}$-partition $\pi$ in $W$, we have
\begin{equation*}
|| \sum_{(t,I) \in \pi } (~ v(t)|I| - V(I)~) || < \varepsilon.   
\end{equation*}

Fix an arbitrary   $J \in \mathcal{I}_{W}$. 
We will prove that $v_{J} = v \vert_{J}$   
is McShane integrable on $J$ with the primitive $V_{J} = V \vert_{\mathcal{I}_{J}}$.  
Let $\pi_{J}$ be a $\delta_{J}$-fine $\mathcal{M}$-partition of $J$, 
where $\delta_{J} = \delta_{\varepsilon} \vert_{J}$. 
Then, $\pi_{J}$ is a $\delta_{\varepsilon}$-fine $\mathcal{M}$-partition in $W$
and, therefore, 
\begin{equation*}
\begin{split}
||\sum_{(t,I) \in \pi_{J}} (~  v_{J}(t). |I|-V_{J}(I) ~)|| 
&= || \sum_{(t,I) \in \pi_{J}} (~ v(t). |I| - V(I) ~) || < \varepsilon. 
\end{split} 
\end{equation*}
This means that $v_{J}$ is McShane integrable on $J$ with the primitive $V_{J}$. 
Since $J$ was arbitrary, the last result means that  
$v$ is locally McShane integrable on $W$ with the primitive $V$, 
and this ends the proof. 
\end{proof}

Using Lemma 3.4.1 in \cite{SCH1}, 
the next lemma can be proved in the same manner 
as Lemma \ref{L2.4}.

\begin{lemma}\label{L22.4} 
Let $W$ be an open subset of $\mathbb{R}^{m}$, 
let $v: W \to X$ be a function and let $V: \mathcal{I}_{W} \to X$ be an additive interval function. 
Then, the following statements are equivalent:
\begin{itemize}
\item[(i)]
$v$ is locally Henstock-Kurzweil integrable on $W$ with the primitive $V$,
\item[(ii)] 
for each $\varepsilon >0$ there exists a gauge $\delta_{\varepsilon}$ on $W$ such that  
for each $\delta_{\varepsilon}$-fine $\mathcal{HK}$-partition $\pi$ in 
$W$, 
we have
$$
|| \sum_{(t,I) \in \pi } 
(~ v(t)|I| - V(I)~) || < \varepsilon. 
$$
\end{itemize}
\end{lemma}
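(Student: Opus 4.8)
The plan is to mirror the proof of Lemma \ref{L2.4}, replacing every occurrence of ``McShane'' by ``Henstock-Kurzweil'' and every ``$\mathcal{M}$-partition'' by ``$\mathcal{HK}$-partition'', and invoking the Henstock-Kurzweil analogue of the Saks--Henstock lemma (Lemma 3.4.1 in \cite{SCH1}) wherever Lemma 3.4.2 in \cite{SCH1} was used. The only structural point that needs checking is that the combinatorial splitting of a $\delta_{\varepsilon}$-fine $\mathcal{HK}$-partition $\pi$ in $W$ into the pieces $\pi_1,\pi_2$ and then into the $\pi_1^k,\pi_2^k$ still produces $\delta_k$-fine $\mathcal{HK}$-partitions in each $C_k$; this is where the tag condition $t\in I$ matters, so that part of the argument is slightly more delicate than in the McShane case and I expect it to be the main (though still routine) obstacle.

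First I would prove $(i)\Rightarrow(ii)$. Fix a division $(C_k)$ of $W$ and $\varepsilon>0$; for each $k$ the restriction $v_k=v|_{C_k}$ is Henstock-Kurzweil integrable on $C_k$ with primitive $V_k$, so by Lemma 3.4.1 in \cite{SCH1} there is a gauge $\delta_k$ on $C_k$ with $\|\sum_{(t,I)\in\pi_k}(v_k(t)|I|-V_k(I))\|\le 2^{-k}\varepsilon/2$ for every $\delta_k$-fine $\mathcal{HK}$-partition $\pi_k$ \emph{in} $C_k$ (the Saks--Henstock lemma gives the estimate for sub-partitions, not only full partitions, which is exactly what is needed). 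As in Lemma \ref{L2.4}, shrink each $\delta_k$ so that $B_m(t,\delta_k(t))\subset C_k$ when $t\in(C_k)^o$ and $B_m(t,\delta_k(t))\subset\bigcup_{j\in\mathcal N_t}C_j$ when $t\in C_k\setminus(C_k)^o$, then define $\delta_\varepsilon$ on $W$ by the same two-case formula. Given a $\delta_\varepsilon$-fine $\mathcal{HK}$-partition $\pi$ in $W$, write $\pi=\pi_1\cup\pi_2$ exactly as before; for $\pi_1$ each tagged interval already lies in a single $C_k$ with its tag, and for $\pi_2$ each $(t,I)$ is replaced by the pieces $(t,I\cap C_j)$, $j\in\mathcal N_t$, with $t\in C_j$ (indeed $t\in I\cap C_j$ since $t\in I$), so $\pi_1^k$ and $\pi_2^k$ are genuine $\delta_k$-fine $\mathcal{HK}$-partitions in $C_k$. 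Applying the $\pi_k$-estimates termwise and summing the geometric series $\sum 2^{-k}\varepsilon/2=\varepsilon/2$ over the two groups yields $\|\sum_{(t,I)\in\pi}(v(t)|I|-V(I))\|<\varepsilon$, which is $(ii)$.

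Then $(ii)\Rightarrow(i)$ is immediate and identical to the McShane case: given $J\in\mathcal I_W$ and $\varepsilon>0$, the gauge $\delta_J=\delta_\varepsilon|_J$ works, because any $\delta_J$-fine $\mathcal{HK}$-partition \emph{of} $J$ is in particular a $\delta_\varepsilon$-fine $\mathcal{HK}$-partition \emph{in} $W$, so the Riemann-sum estimate transfers verbatim and shows $v_J$ is Henstock-Kurzweil integrable on $J$ with primitive $V_J$; since $J$ was arbitrary, $v$ is locally Henstock-Kurzweil integrable on $W$ with primitive $V$. The only genuinely new ingredient relative to Lemma \ref{L2.4} is thus the use of Lemma 3.4.1 instead of Lemma 3.4.2, together with the verification in the previous paragraph that the tag-containment constraint is preserved under intersecting tagged intervals with the sets $C_k$ of the division.
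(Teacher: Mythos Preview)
Your proposal is correct and follows exactly the approach the paper indicates: it mirrors the proof of Lemma~\ref{L2.4} word for word, substituting $\mathcal{HK}$-partitions for $\mathcal{M}$-partitions and invoking Lemma~3.4.1 in \cite{SCH1} in place of Lemma~3.4.2. Your explicit check that the tag-containment condition $t\in I\cap C_j$ survives the splitting (since $t\in I$ by the $\mathcal{HK}$-hypothesis and $t\in C_j$ for $j\in\mathcal N_t$) is the one point that genuinely distinguishes the $\mathcal{HK}$ case from the $\mathcal{M}$ case, and you handle it correctly.
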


Given a function $f:G \to X$, 
we denote by $h: \mathbb{R}^{m} \to X$ the function defined as follows
\begin{equation*}
h(t) = 
\left \{
\begin{array}{ll}
f(t) & \text{ if }t \in G \\
0 & \text{ if }t \in \mathbb{R}^{m} \setminus G. 
\end{array}
\right.
\end{equation*}

\begin{lemma}\label{L2.5}
Let $f:G \to X$ be a function. 
If $h$ is locally McShane integrable on $\mathbb{R}^{m}$ with the primitive $H$, 
then given $\varepsilon >0$ there exists a gauge $\delta_{v}$ on $(G^{o})^{c}$ 
such that for each 
$\delta_{v}$-fine  $(G^{o})^{c}$-tagged $\mathcal{M}$-partition $\pi_{v}$ in $\mathbb{R}^{m}$, 
we have
\begin{equation*}
||\sum_{(t,I) \in \pi_{v}} H(I) || < \varepsilon.
\end{equation*}
\end{lemma}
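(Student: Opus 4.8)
The plan is to compare the sum $\sum_{(t,I)\in\pi_v} H(I)$ with the McShane Riemann sum of $h$ over the same tagged intervals. For any $(G^{o})^{c}$-tagged $\mathcal{M}$-partition $\pi_v$ in $\mathbb{R}^{m}$ one has
$$
\Big\|\sum_{(t,I)\in\pi_v} H(I)\Big\|
\le
\Big\|\sum_{(t,I)\in\pi_v}\big(h(t)|I|-H(I)\big)\Big\|
+
\Big\|\sum_{(t,I)\in\pi_v} h(t)|I|\Big\|,
$$
so it suffices to produce a gauge on $(G^{o})^{c}$ that makes each of the two terms on the right smaller than $\varepsilon/2$.

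For the first term I would use that $h$ is locally McShane integrable on $\mathbb{R}^{m}$ with primitive $H$. By Lemma \ref{L2.4}, applied with $W=\mathbb{R}^{m}$, $v=h$, $V=H$ and with $\varepsilon/2$ in place of $\varepsilon$, there is a gauge $\delta_{1}$ on $\mathbb{R}^{m}$ such that $\|\sum_{(t,I)\in\pi}(h(t)|I|-H(I))\|<\varepsilon/2$ for every $\delta_{1}$-fine $\mathcal{M}$-partition $\pi$ in $\mathbb{R}^{m}$; in particular this holds for every $(G^{o})^{c}$-tagged such partition.

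For the second term, observe that the function $g=h\cdot\mathbbm{1}_{(G^{o})^{c}}:\mathbb{R}^{m}\to X$ vanishes off the set $G\setminus G^{o}$ (since $h$ vanishes on $\mathbb{R}^{m}\setminus G$), and $|G\setminus G^{o}|=0$. Hence, by Theorem 3.3.1 in \cite{SCH1}, for every $J\in\mathcal{I}$ the restriction $g\vert_{J}$ is McShane integrable on $J$ with $(M)\int_{I}g\vert_{J}\,d\lambda=0$ for all $I\in\mathcal{I}_{J}$, so $g$ is locally McShane integrable on $\mathbb{R}^{m}$ with primitive the zero interval function. Applying Lemma \ref{L2.4} once more, with $v=g$, $V=0$ and $\varepsilon/2$, yields a gauge $\delta_{2}$ on $\mathbb{R}^{m}$ such that $\|\sum_{(t,I)\in\pi}g(t)|I|\|<\varepsilon/2$ for every $\delta_{2}$-fine $\mathcal{M}$-partition $\pi$ in $\mathbb{R}^{m}$. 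Since $g(t)=h(t)$ for every $t\in(G^{o})^{c}$, this gives $\|\sum_{(t,I)\in\pi_v}h(t)|I|\|<\varepsilon/2$ for every $(G^{o})^{c}$-tagged $\delta_{2}$-fine $\mathcal{M}$-partition $\pi_v$ in $\mathbb{R}^{m}$.

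Finally I would put $\delta_{v}(t)=\min\{\delta_{1}(t),\delta_{2}(t)\}$ for $t\in(G^{o})^{c}$. Any $(G^{o})^{c}$-tagged $\delta_{v}$-fine $\mathcal{M}$-partition $\pi_v$ in $\mathbb{R}^{m}$ is then simultaneously $\delta_{1}$-fine and $\delta_{2}$-fine, because $\delta$-fineness constrains the intervals only through the values of the gauge at the tags, which all lie in $(G^{o})^{c}$; the displayed estimate then gives $\|\sum_{(t,I)\in\pi_v}H(I)\|<\varepsilon$. The only point requiring care — and the reason one cannot simply invoke Lemma \ref{L2.1} — is that $\pi_v$ is a partition in the unbounded set $\mathbb{R}^{m}$ rather than in a fixed interval $J$; this is exactly what forces the detour through local McShane integrability and Lemma \ref{L2.4}, together with the observation that $h\cdot\mathbbm{1}_{(G^{o})^{c}}$ is supported on a Lebesgue-null set and therefore has vanishing primitive.
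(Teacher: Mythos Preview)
Your proof is correct and, in fact, a bit slicker than the paper's. Both arguments start from the same decomposition
\[
\Big\|\sum_{(t,I)\in\pi_v} H(I)\Big\|
\le
\Big\|\sum_{(t,I)\in\pi_v}\big(h(t)|I|-H(I)\big)\Big\|
+
\Big\|\sum_{(t,I)\in\pi_v} h(t)|I|\Big\|
\]
and both control the first term via Lemma~\ref{L2.4}. The difference lies in the second term. The paper handles $\|\sum h(t)|I|\|$ by an explicit bare-hands construction: it stratifies $G\setminus G^{o}$ into the level sets $N_{k}=\{t\in G\setminus G^{o}:k-1\le\|h(t)\|<k\}$, covers each $N_{k}$ by an open set $G_{k}$ of measure $<\varepsilon/(3k\,2^{k})$, and forces the gauge to trap intervals with tag in $N_{k}$ inside $G_{k}$, so that $\sum\|h(t)\|\,|I|\le\sum_{k}k\,|G_{k}|<\varepsilon/3$. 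You instead observe that $g=h\cdot\mathbbm{1}_{(G^{o})^{c}}$ vanishes off the null set $G\setminus G^{o}$, invoke Theorem~3.3.1 of \cite{SCH1} to conclude that $g$ is locally McShane integrable with zero primitive, and then apply Lemma~\ref{L2.4} a second time. Your route packages the paper's level-set computation into the existing machinery, at the cost of a second appeal to Lemma~\ref{L2.4}; the paper's route is more self-contained but essentially re-derives a special case of what Theorem~3.3.1 together with Lemma~\ref{L2.4} already provides. Your closing remark about why Lemma~\ref{L2.1} is inadequate here (the partitions live in $\mathbb{R}^{m}$, not in a fixed $J$) is exactly the right diagnosis.
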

\begin{proof}
By Lemma \ref{L2.4}, 
given $\varepsilon >0$ there exists a gauge $\delta_{\varepsilon}$ 
on $\mathbb{R}^{m}$ such that  
for each $\delta_{\varepsilon}$-fine $\mathcal{M}$-partition $\pi$ in 
$\mathbb{R}^{m}$, 
we have
\begin{equation}\label{eq_C21.1}
|| \sum_{(t,I) \in \pi } 
(~ h(t)|I| - H(I)~) || < \frac{\varepsilon}{3}. 
\end{equation}

For each $k \in \mathbb{N}$, let
$$
N_{k} = \{ t \in G \setminus G^{o} :  k-1 \leq ||h(t)|| <k \}.
$$
Since $|G \setminus G^{o}| = 0$, for each $k \in \mathbb{N}$, 
we have $|N_{k}| = 0$ and, therefore, 
there exists an open set $G_{k}$ such that 
\begin{equation}\label{eq_C21.2}
G_{k} \supset N_{k} 
\quad\text{and}\quad
|G_{k}| < \frac{\varepsilon}{3 .k .2^{k}}.
\end{equation}
Define a gauge $\delta_{v}$ on $(G^{o})^{c}$ in such a way that 
$$
t \in G^{c} \Rightarrow \delta_{v}(t) = \delta_{\varepsilon}(t)
$$
and
$$
t \in N_{k} \subset G_{k} \Rightarrow  
B_{m}(t, \delta_{v}(t)) \subset G_{k}\text{ and }\delta_{v}(t)) \leq \delta_{\varepsilon}(t). 
$$
Suppose that $\pi_{v}$ is an arbitrary 
$\delta_{v}$-fine $(G^{o})^{c}$-tagged $\mathcal{M}$-partition in $\mathbb{R}^{m}$. 
Then, $\pi_{v} = \pi_{v}^{1} \cup \pi_{v}^{2}$ where
$$
\pi_{v}^{1} = \{ (t,I) \in \pi_{v}: t \in G^{c}\}
\text{ and }
\pi_{v}^{2} = \{ (t,I) \in \pi_{v}: t \in G \setminus G^{o} \}.
$$
Hence,
\begin{equation}\label{eq_C21.3}
\begin{split}
||\sum_{(t,I) \in \pi_{v}} H(I) || \leq 
||\sum_{(t,I) \in \pi_{v}^{1}} H(I) || + ||\sum_{(t,I) \in \pi_{v}^{2}} H(I) ||
\end{split} 
\end{equation} 
Since $\pi_{v}^{1}$ and $\pi_{v}^{2}$ are also 
$\delta_{\varepsilon}$-fine $\mathcal{M}$-partitions in $\mathbb{R}^{m}$, 
we obtain by \eqref {eq_C21.1} that 
\begin{equation}\label{eq_C21.4}
\begin{split}
||\sum_{(t,I) \in \pi_{v}^{1}} H(I) || = 
||\sum_{(t,I) \in \pi_{v}^{1}} (~ h(t)|I| - H(I)~)||< \frac{\varepsilon}{3}
\end{split} 
\end{equation} 
and 
\begin{equation}\label{eq_C21.5}
\begin{split}
||\sum_{(t,I) \in \pi_{v}^{2}} H(I) || 
&\leq
||\sum_{(t,I) \in \pi_{v}^{2}} (~ h(t)|I| - H(I)~)|| + 
||\sum_{(t,I) \in \pi_{v}^{2}} h(t)|I| ~|| \\ 
&< \frac{\varepsilon}{3} + ||\sum_{(t,I) \in \pi_{v}^{2}} h(t)|I| ~|| 
=\frac{\varepsilon}{3} + ||\sum_{(t,I) \in \pi_{v}^{2}} f(t)|I| ~||. 
\end{split} 
\end{equation} 
By \eqref{eq_C21.2}, we have also
\begin{equation*}
\begin{split}
||\sum_{(t,I) \in \pi_{v}^{2}} h(t)|I| ~|| 
=& ||\sum_{k=1}^{+\infty} 
\left ( \sum_{\underset{t \in N_{k}}{(t,I) \in \pi_{v}^{2}}} h(t)|I| \right) ~|| \\
\leq&
\sum_{k=1}^{+\infty} 
||\left ( \sum_{\underset{t \in N_{k}}{(t,I) \in \pi_{v}^{2}}} h(t)|I| \right) || 
<
\sum_{k=1}^{+\infty} 
k\left ( \sum_{\underset{t \in N_{k}}{(t,I) \in \pi_{v}^{2}}} |I| \right) || \\
\leq&
\sum_{k=1}^{+\infty} k |G_{k}| < 
\sum_{k=1}^{+\infty} \frac{\varepsilon}{3 . 2^{k}} = \frac{\varepsilon}{3}. 
\end{split} 
\end{equation*} 
The last result together with \eqref{eq_C21.3}, \eqref{eq_C21.4} 
and \eqref{eq_C21.5} yields 
\begin{equation*}
\begin{split}
||\sum_{(t,I) \in \pi_{v}} H(I) || < \frac{\varepsilon}{3} + \frac{\varepsilon}{3} + \frac{\varepsilon}{3} 
= \varepsilon,
\end{split} 
\end{equation*}
and since $\pi_{v}$ was arbitrary, 
this ends the proof.
\end{proof}

Using Lemma \ref{L22.4}, the next lemma can be proved in the same manner 
as Lemma \ref{L2.5}.  

\begin{lemma}\label{L2.55}
Let $f:G \to X$ be a function. 
If $h$ is locally Henstock-Kurzweil integrable on $\mathbb{R}^{m}$ with the primitive $H$, 
then given $\varepsilon >0$ there exists a gauge $\delta_{v}$ on $(G^{o})^{c}$ 
such that for each 
$\delta_{v}$-fine  $(G^{o})^{c}$-tagged $\mathcal{HK}$-partition $\pi_{v}$ in $\mathbb{R}^{m}$, 
we have
\begin{equation*}
||\sum_{(t,I) \in \pi_{v}} H(I) || < \varepsilon.
\end{equation*}
\end{lemma}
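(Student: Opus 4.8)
The plan is to transcribe the proof of Lemma \ref{L2.5} line by line, replacing every $\mathcal{M}$-partition by an $\mathcal{HK}$-partition and invoking Lemma \ref{L22.4} in place of Lemma \ref{L2.4}. First I would apply Lemma \ref{L22.4} to $h$, which is locally Henstock-Kurzweil integrable on $\mathbb{R}^{m}$ with primitive $H$: for the given $\varepsilon>0$ there is a gauge $\delta_{\varepsilon}$ on $\mathbb{R}^{m}$ such that every $\delta_{\varepsilon}$-fine $\mathcal{HK}$-partition $\pi$ in $\mathbb{R}^{m}$ satisfies $||\sum_{(t,I)\in\pi}(h(t)|I|-H(I))||<\varepsilon/3$.

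Next, since $|G\setminus G^{o}|=0$, for each $k\in\mathbb{N}$ the set $N_{k}=\{t\in G\setminus G^{o}:k-1\le\|h(t)\|<k\}$ is Lebesgue null, hence contained in an open set $G_{k}$ with $|G_{k}|<\varepsilon/(3k2^{k})$. I would then build a gauge $\delta_{v}$ on $(G^{o})^{c}$ by setting $\delta_{v}(t)=\delta_{\varepsilon}(t)$ for $t\in G^{c}$ and, for $t\in N_{k}$, choosing $\delta_{v}(t)\le\delta_{\varepsilon}(t)$ small enough that $B_{m}(t,\delta_{v}(t))\subset G_{k}$. Any $\delta_{v}$-fine $(G^{o})^{c}$-tagged $\mathcal{HK}$-partition $\pi_{v}$ in $\mathbb{R}^{m}$ then splits as $\pi_{v}=\pi_{v}^{1}\cup\pi_{v}^{2}$ with the tags of $\pi_{v}^{1}$ in $G^{c}$ and the tags of $\pi_{v}^{2}$ in $G\setminus G^{o}$; both $\pi_{v}^{1}$ and $\pi_{v}^{2}$ are $\delta_{\varepsilon}$-fine $\mathcal{HK}$-partitions in $\mathbb{R}^{m}$ (the tag-in-interval condition is clearly inherited).

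The estimate then proceeds in three parts. Since $h$ vanishes on $G^{c}$, $||\sum_{(t,I)\in\pi_{v}^{1}}H(I)||=||\sum_{(t,I)\in\pi_{v}^{1}}(h(t)|I|-H(I))||<\varepsilon/3$. For $\pi_{v}^{2}$ one writes $||\sum_{(t,I)\in\pi_{v}^{2}}H(I)||\le||\sum_{(t,I)\in\pi_{v}^{2}}(h(t)|I|-H(I))||+||\sum_{(t,I)\in\pi_{v}^{2}}h(t)|I|||<\varepsilon/3+||\sum_{(t,I)\in\pi_{v}^{2}}f(t)|I|||$, and grouping $\pi_{v}^{2}$ according to which $N_{k}$ contains the tag, using $\|h(t)\|<k$ on $N_{k}$ and $|G_{k}|<\varepsilon/(3k2^{k})$, gives $||\sum_{(t,I)\in\pi_{v}^{2}}f(t)|I|||\le\sum_{k}k|G_{k}|<\varepsilon/3$. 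Adding the three bounds yields $||\sum_{(t,I)\in\pi_{v}}H(I)||<\varepsilon$, and since $\pi_{v}$ was arbitrary the lemma follows.

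There is essentially no new obstacle relative to Lemma \ref{L2.5}: the only points needing (routine) care are that Lemma \ref{L22.4} does provide a Saks--Henstock-type gauge on all of $\mathbb{R}^{m}$, not merely on bounded intervals, and that the sub-collections $\pi_{v}^{1},\pi_{v}^{2}$ remain legitimate $\mathcal{HK}$-partitions; both are immediate. Hence the argument is a line-by-line copy of the proof of Lemma \ref{L2.5} with the Henstock-Kurzweil substitutions in place.
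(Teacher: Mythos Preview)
Your proposal is correct and follows exactly the approach the paper indicates: the paper's proof of Lemma~\ref{L2.55} consists solely of the remark that it ``can be proved in the same manner as Lemma~\ref{L2.5}'' using Lemma~\ref{L22.4} in place of Lemma~\ref{L2.4}, which is precisely the line-by-line transcription you carry out. There is no divergence in strategy or detail.
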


We are now ready to present the main results.

\begin{theorem}\label{T2.1}
Let $f: G \to X$ be a function and let $F: \mathcal{I}_{G} \to X$ be an additive interval function.  
Then, the following statements are equivalent:
\begin{itemize}
\item[(i)]
$f$ is Hake-McShane integrable on $G$ with the primitive $F$,
\item[(ii)] 
$h$ is locally McShane integrable on $\mathbb{R}^{m}$ with the primitive $H$ 
such that $H(I) = F(I)$, for all $I \in \mathcal{I}_{G}$, and
\begin{equation}\label{eq_T21.0}
H(I) = \sum_{k:|I \cap C_{k}|>0} F(I \cap C_{k}), \text{ for all }I \in \mathcal{I},
\end{equation}
whenever $(C_{k}) \in \mathscr{D}_{G^{o}}$. 
\end{itemize}
\end{theorem}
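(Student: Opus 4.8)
The plan is to prove the two implications separately; throughout fix a division $(C^{0}_{k})\in\mathscr{D}_{G^{o}}$, which exists by Lemma 2.43 in \cite{FOLL}, and note that since $C^{0}_{k}\subset G^{o}\subset G$ we have $I\cap C^{0}_{k}\in\mathcal{I}_{G}$ whenever $|I\cap C^{0}_{k}|>0$. For $(ii)\Rightarrow(i)$ I expect the three requirements of Definition \ref{D_Hakeint} to follow almost at once from the lemmas already proved. Assuming $h$ is locally McShane integrable on $\mathbb{R}^{m}$ with primitive $H$, with $H\vert_{\mathcal{I}_{G}}=F$ and \eqref{eq_T21.0}, Lemma \ref{L2.4} provides, for each $\varepsilon>0$, a gauge on $\mathbb{R}^{m}$; its restriction to $G^{o}$ yields the Riemann-sum condition, because an $\mathcal{M}$-partition in $G^{o}$ is an $\mathcal{M}$-partition in $\mathbb{R}^{m}$ on whose tags $h=f$ (as $G^{o}\subset G$) and on whose intervals $H=F$ (as $\mathcal{I}_{G^{o}}\subset\mathcal{I}_{G}$). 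That $F$ is a Hake-function amounts to the identity $F(I)=\sum_{k:|I\cap I_{k}|>0}F(I\cap I_{k})$ for $I\in\mathcal{I}_{G}$, which is $H(I)=F(I)$ combined with \eqref{eq_T21.0}, together with the unconditional convergence of that series for arbitrary $I\in\mathcal{I}$; the latter is immediate, since \eqref{eq_T21.0} holds for every element of $\mathscr{D}_{G^{o}}$, hence for every rearrangement of $(I_{k})$. Finally, by \eqref{eq_T21.0} the inner series in Definition \ref{D_NegV} equals $H(I)$, so the estimate demanded there reads $\|\sum_{(t,I)\in\pi_{v}}H(I)\|<\varepsilon$, which is exactly Lemma \ref{L2.5}; thus $F$ has $\mathcal{M}$-negligible variation outside $G^{o}$.

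For $(i)\Rightarrow(ii)$, assume $f$ is Hake-McShane integrable on $G$ with primitive $F$, and define the candidate primitive $H:\mathcal{I}\to X$ by
$$
H(I)=\sum_{k:|I\cap C^{0}_{k}|>0}F(I\cap C^{0}_{k})\qquad(I\in\mathcal{I}),
$$
the series converging unconditionally because $F$ is a Hake-function. Then $H$ is additive on $\mathcal{I}$ by additivity of $F$ and the block-rearrangement property of unconditionally convergent series, and $H\vert_{\mathcal{I}_{G}}=F$ is exactly the second clause of Definition \ref{D_Hake}. Equation \eqref{eq_T21.0} for an arbitrary $(C_{k})\in\mathscr{D}_{G^{o}}$ can be obtained from the Hake-function property alone: the non-degenerate members of $(C^{0}_{j}\cap C_{k})_{j,k}$ again form a division of $G^{o}$, so $\sum_{j,k}F(I\cap C^{0}_{j}\cap C_{k})$ converges unconditionally; applying the second clause of Definition \ref{D_Hake} to the intervals $K=I\cap C^{0}_{j}$ and to $K=I\cap C_{k}$ and summing the double series the two ways gives $H(I)=\sum_{j}F(I\cap C^{0}_{j})=\sum_{k}F(I\cap C_{k})$.

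The substantial part is showing $h$ is locally McShane integrable on $\mathbb{R}^{m}$ with primitive $H$; I would do this straight from Definition \ref{D_LocInt}, proving that for each $J\in\mathcal{I}$ and $\varepsilon>0$ there is a gauge $\delta$ on $J$ with $\|\sum_{(t,I)\in\pi}(h(t)|I|-H(I))\|<\varepsilon$ for every $\delta$-fine $\mathcal{M}$-partition $\pi$ of $J$. Since $\sum_{(t,I)\in\pi}H(I)=H(J)$ this says $h\vert_{J}$ is McShane integrable on $J$ with integral $H(J)$, and applying the same statement with $J$ replaced by an arbitrary $K\in\mathcal{I}$ gives $(M)\int_{K}h\,d\lambda=H(K)$, so $H$ is the primitive. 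The key device is to split $\pi=\pi_{o}\cup\pi_{b}\cup\pi_{c}$ according to whether the tag lies in $G^{o}$, in $G\setminus G^{o}$, or in $G^{c}$, and to build $\delta$ so that: on $G^{o}\cap J$, $\delta(t)\le\delta_{1}(t)$ and $B_{m}(t,\delta(t))\subset G^{o}$, where $\delta_{1}$ is the Hake-McShane gauge for $\varepsilon/3$ — this forces every interval of $\pi_{o}$ inside $G^{o}$, so $\pi_{o}$ is a $\delta_{1}$-fine $\mathcal{M}$-partition in $G^{o}$ and, since $h=f$ and $H=F$ there, contributes $<\varepsilon/3$; on $(G\setminus G^{o})\cap J$, $\delta(t)\le\delta_{2}(t)$, the gauge Lemma \ref{L2.2} supplies for this $J$ and $\varepsilon/3$, so $\pi_{b}$ contributes $\|\sum(f(t)|I|-\sum_{k}F(I\cap C^{0}_{k}))\|<\varepsilon/3$; and on $G^{c}\cap J$, $\delta(t)\le\delta_{3}(t)$, the gauge from the $\mathcal{M}$-negligible variation of $F$ outside $G^{o}$ for $\varepsilon/3$, so (using $h=0$ there) $\pi_{c}$ contributes $\|\sum_{(t,I)\in\pi_{c}}\sum_{k}F(I\cap C^{0}_{k})\|<\varepsilon/3$. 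Adding the three bounds gives $\varepsilon$, and Cousin's lemma supplies $\delta$-fine partitions of $J$. I expect this gauge construction to be the only real obstacle: one must check that the requirement $B_{m}(t,\delta(t))\subset G^{o}$ (met by taking $\delta(t)\le\tfrac12\operatorname{dist}(t,(G^{o})^{c})$) is compatible with $\delta_{1}$-fineness, and that $\pi_{o},\pi_{b},\pi_{c}$ are bona fide $\mathcal{M}$-partitions of the requisite types in $G^{o}$, in $J$, and in $\mathbb{R}^{m}$ respectively.
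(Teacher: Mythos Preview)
Your proposal is correct and follows essentially the same route as the paper's proof: the same three-way split $\pi=\pi_{o}\cup\pi_{b}\cup\pi_{c}$ of a partition of $J$ according to the tag's location, controlled respectively by the Hake--McShane gauge, Lemma~\ref{L2.2}, and the $\mathcal{M}$-negligible variation hypothesis for $(i)\Rightarrow(ii)$, and the combination of Lemmas~\ref{L2.4} and~\ref{L2.5} for $(ii)\Rightarrow(i)$. The only differences are cosmetic---your $\varepsilon/3+\varepsilon/3+\varepsilon/3$ versus the paper's $\varepsilon/4+\varepsilon/2+\varepsilon/4$---and that you spell out the common-refinement argument showing $H$ is independent of the chosen division, which the paper leaves implicit in the phrase ``where $(C_{k})$ is an arbitrary division of $G^{o}$''.
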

\begin{proof}
$(i) \Rightarrow (ii)$ Assume that $f$ is Hake-McShane integrable on $G$ 
with the primitive $F$. 
Then, 
by Definition \ref{D_Hakeint}, 
given $\varepsilon >0$ there exists a gauge $\delta_{0}$ on $G^{o}$ such that 
for each $\delta_{0}$-fine $\mathcal{M}$-partition $\pi_{0}$ in $G^{o}$, we have
\begin{equation*}
|| \sum_{(t,I) \in \pi_{0} } (~ f(t)|I| - F(I)~) || < \frac{\varepsilon}{4}.   
\end{equation*}

Since $F$ is a Hake-function, we can define an additive interval function 
$H: \mathcal{I} \to X$ as follows
\begin{equation*}
H(I) = \sum_{k: |I \cap C_{k}|>0} F(I \cap C_{k}), 
\text{ for all } I \in \mathcal{I},
\end{equation*} 
where $(C_{k})$ is an arbitrary division of $G^{o}$. 
Clearly, 
$H(I) = F(I)$, for all $I \in \mathcal{I}_{G}$. 
We can choose $\delta_{0}$ so that 
$B_{m}(t, \delta_{0}(t)) \subset G^{o}$ for all $t \in G^{o}$.  
Hence, 
\begin{equation}\label{eq_T21.1} 
|| \sum_{(t,I) \in \pi_{0} } (~ h(t)|I| - H(I)~) || < \frac{\varepsilon}{4},   
\end{equation}
whenever $\pi_{0}$ is a 
$\delta_{0}$-fine $\mathcal{M}$-partition in $G^{o}$.

Assume that an interval $J \in \mathcal{I}$ is given. 
We are going to prove that $h_{J} = h \vert_{J}$ is McShane integrable on $J$ 
with the primitive $H_{J} = H \vert_{\mathcal{I}_{J}}$.

Since $F$ has $\mathcal{M}$-negligible variation outside of $G^{o}$, 
there exists a gauge $\delta_{v}$ on $J \cap (G^{o})^{c}$ 
such that for each $J \cap (G^{o})^{c}$-tagged 
$\delta_{v}$-fine $\mathcal{M}$-partition $\pi_{v}^{0}$ in $J$, 
we have
\begin{equation}\label{eq_Neg_Var1}
||\sum_{(t,I) \in \pi_{v}^{0}} H_{J}(I) ||  = 
||\sum_{(t,I) \in \pi_{v}^{0}} 
\left (  \sum_{k:|I \cap C_{k}|>0}  F(I \cap C_{k}) \right ) || 
< \frac{\varepsilon}{4}. 
\end{equation}
By Lemma \ref{L2.2}, we can choose $\delta_{v}$ so that 
for each $J \cap (G \setminus G^{o})$-tagged 
$\delta_{v}$-fine $\mathcal{M}$-partition $\pi_{v}$ in $J$, we have
\begin{equation}\label{eq_Neg_Var2}
||\sum_{(t,I) \in \pi_{v}} ( h_{J}(t)|I| - H_{J}(I) ) || = 
|| \sum_{(t,I) \in \pi_{v}} 
\left ( ~f(t)|I| - \sum_{k:|I \cap C_{k}| >0} F(I \cap C_{k})~ \right ) 
|| < \frac{\varepsilon}{2}.
\end{equation}

Define a gauge $\delta_{J} : J \to (0,+\infty)$ as follows.  
$$
\delta_{J}(t) 
= 
\left \{
\begin{array}{ll}
\delta_{0}(t) & \text{ if }t \in J \cap G^{o}  \\
\delta_{v}(t) & \text{ if }t \in J \cap (G^{o})^{c}.
\end{array}
\right.
$$
Let $\pi$ be an arbitrary $\delta_{J}$-fine $\mathcal{M}$-partition of $J$. 
Then, 
$$
\pi = \pi_{1} \cup \pi_{2} \cup \pi_{3},
$$ 
where 
\begin{equation*}
\begin{split}
\pi_{1} 
&= \{ (t,I) \in \pi : t \in J \cap G^{o} \}, \\
\pi_{2} 
&= \{(t,I) \in \pi : t \in J \cap (G \setminus G^{o})  \}, \\
\pi_{3} 
&= \{(t,I) \in \pi : t \in J \setminus G  \}. 
\end{split}
\end{equation*}
Note that    
\begin{equation*}
\begin{split}
||\sum_{(t,I) \in \pi} (~h_{J}(t)|I| - H_{J}(I)~) || 
\leq&
||\sum_{(t,I) \in \pi_{1}} (~h_{J}(t)|I| - H_{J}(I)~) || \\
+& ||\sum_{(t,I) \in \pi_{2}} (~h_{J}(t)|I| - H_{J}(I)~) || \\
+& ||\sum_{(t,I) \in \pi_{3}} H_{J}(I)||.
\end{split}
\end{equation*} 
Hence, by \eqref{eq_T21.1}, \eqref{eq_Neg_Var1} and \eqref{eq_Neg_Var2}, 
it follows that
\begin{equation*} 
\begin{split}
||\sum_{(t,I) \in \pi} (~h_{J}(t)|I| - H_{J}(I)~) || 
< \frac{\varepsilon}{4} + \frac{\varepsilon}{2} + \frac{\varepsilon}{4} = \varepsilon,  
\end{split}
\end{equation*} 
and since $\pi$ was an arbitrary $\delta_{J}$-fine $\mathcal{M}$-partition of $J$, 
we obtain that $h_{J}$ is McShane integrable on $J$ 
with the primitive $F_{J}$.

$(ii) \Rightarrow (i)$ 
Assume that $(ii)$ holds.   
Then, 
by Lemma \ref{L2.4}, 
given $\varepsilon >0$ there exists a gauge $\delta_{\varepsilon}$ on $\mathbb{R}^{m}$ such that  
for each $\delta_{\varepsilon}$-fine $\mathcal{M}$-partition $\pi$ in 
$\mathbb{R}^{m}$, 
we have
\begin{equation}\label{eq_LOCint.1}
|| \sum_{(t,I) \in \pi } 
(~ h(t)|I| - H(I)~) || < \varepsilon. 
\end{equation}
We can choose $\delta_{\varepsilon}$ so that 
$B_{m}(t, \delta_{\varepsilon}(t)) \subset G^{o}$ 
for all $t \in G^{o}$.  
Let $\pi_{0}$ be 
a $\delta_{0}$-fine $\mathcal{M}$-partition in $G^{0}$, 
where $\delta_{0} = \delta_{\varepsilon} \vert_{G^{o}}$. 
Then, $\pi_{0}$ is a $\delta_{\varepsilon}$-fine $\mathcal{M}$-partition in 
$\mathbb{R}^{m}$. 
Hence, by \eqref{eq_LOCint.1} it follows that
\begin{equation*}
|| \sum_{(t,I) \in \pi_{0} } 
(~ f(t)|I| - F(I)~) || < \varepsilon. 
\end{equation*}

By equality 
$$
H(I) = F(I), \text{ for all }I \in \mathcal{I}_{G}
$$ 
and by \eqref{eq_T21.0}, it follows that $F$ is a Hake-function.

It remains to prove that $F$ has $\mathcal{M}$-negligible variation outside of $G^{o}$. 
By Lemma \ref{L2.5}
there exists a gauge $\delta_{v}$ on $(G^{o})^{c}$ 
such that for each 
$\delta_{v}$-fine  $(G^{o})^{c}$-tagged $\mathcal{M}$-partition $\pi_{v}$ in $\mathbb{R}^{m}$, 
we have
\begin{equation*}
||\sum_{(t,I) \in \pi_{v}} H(I) || < \varepsilon.
\end{equation*}
Hence,
\begin{equation*}
||\sum_{(t,I) \in \pi_{v}} 
\left ( \sum_{k:|I \cap C_{k}|>0} F(I \cap C_{k}) \right) || \leq   \varepsilon, 
\end{equation*}
whenever $(C_{k}) \in \mathscr{D}_{G^{o}}$. 
This means that $F$ has $\mathcal{M}$-negligible variation outside of $G^{o}$, 
and this ends the proof.
\end{proof}

\begin{theorem}\label{T2.2}
Let $f: G \to X$ be a function and let $F: \mathcal{I}_{G} \to X$ be an additive interval function.  
Then, the following statements are equivalent:
\begin{itemize}
\item[(i)]
$f$ is Hake-Henstock-Kurzweil integrable on $G$ with the primitive $F$,
\item[(ii)] 
$h$ is locally Henstock-Kurzweil integrable on $\mathbb{R}^{m}$ with the primitive $H$ 
such that $H(I) = F(I)$, for all $I \in \mathcal{I}_{G}$, and
\begin{equation*}
H(I) = \sum_{k:|I \cap C_{k}|>0} F(I \cap C_{k}), \text{ for all }I \in \mathcal{I},
\end{equation*}
whenever $(C_{k}) \in \mathscr{D}_{G^{o}}$. 
\end{itemize}
\end{theorem}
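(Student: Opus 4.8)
The plan is to follow the proof of Theorem \ref{T2.1} line by line, replacing every occurrence of the McShane integral by the Henstock-Kurzweil integral, every $\mathcal{M}$-partition by an $\mathcal{HK}$-partition, and the three auxiliary results used there by their Henstock-Kurzweil counterparts: Lemma \ref{L22.4} in place of Lemma \ref{L2.4}, Lemma \ref{L2.3} in place of Lemma \ref{L2.2}, and Lemma \ref{L2.55} in place of Lemma \ref{L2.5}.

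For the implication $(i) \Rightarrow (ii)$, I would start from Definition \ref{D_Hakeint}: given $\varepsilon > 0$, choose a gauge $\delta_{0}$ on $G^{o}$, which we may take to satisfy $B_{m}(t,\delta_{0}(t)) \subset G^{o}$ for all $t \in G^{o}$, such that every $\delta_{0}$-fine $\mathcal{HK}$-partition $\pi_{0}$ in $G^{o}$ has $||\sum_{(t,I) \in \pi_{0}}(f(t)|I| - F(I))|| < \varepsilon/4$, equivalently $||\sum_{(t,I) \in \pi_{0}}(h(t)|I| - H(I))|| < \varepsilon/4$, where $H : \mathcal{I} \to X$ is the additive interval function $H(I) = \sum_{k:|I \cap C_{k}| > 0} F(I \cap C_{k})$ defined via an arbitrary division $(C_{k}) \in \mathscr{D}_{G^{o}}$ (this is well defined because $F$ is a Hake-function, and $H(I) = F(I)$ for $I \in \mathcal{I}_{G}$). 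Fixing $J \in \mathcal{I}$, I would show that $h_{J} = h \vert_{J}$ is Henstock-Kurzweil integrable on $J$ with primitive $H_{J} = H \vert_{\mathcal{I}_{J}}$: using that $F$ has $\mathcal{HK}$-negligible variation outside of $G^{o}$ together with Lemma \ref{L2.3}, produce a gauge $\delta_{v}$ on $J \cap (G^{o})^{c}$ controlling both $||\sum_{(t,I)\in\pi_{v}^{0}} H_{J}(I)||$ over $J \cap (G^{o})^{c}$-tagged partitions and $||\sum_{(t,I)\in\pi_{v}}(h_{J}(t)|I| - H_{J}(I))||$ over $J \cap (G \setminus G^{o})$-tagged partitions. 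Glue $\delta_{0}$ and $\delta_{v}$ into a gauge $\delta_{J}$ on $J$, split an arbitrary $\delta_{J}$-fine $\mathcal{HK}$-partition of $J$ into the pieces tagged in $J \cap G^{o}$, in $J \cap (G\setminus G^{o})$, and in $J \setminus G$, and estimate the three pieces by $\varepsilon/4$, $\varepsilon/2$, $\varepsilon/4$ respectively. Since $J$ was arbitrary, $h$ is locally Henstock-Kurzweil integrable on $\mathbb{R}^{m}$ with primitive $H$.

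For $(ii) \Rightarrow (i)$, I would invoke Lemma \ref{L22.4} to get, for each $\varepsilon > 0$, a gauge $\delta_{\varepsilon}$ on $\mathbb{R}^{m}$ with $||\sum_{(t,I)\in\pi}(h(t)|I| - H(I))|| < \varepsilon$ for every $\delta_{\varepsilon}$-fine $\mathcal{HK}$-partition $\pi$ in $\mathbb{R}^{m}$; shrinking $\delta_{\varepsilon}$ so that $B_{m}(t,\delta_{\varepsilon}(t)) \subset G^{o}$ on $G^{o}$ and restricting to $G^{o}$ yields the first clause of Definition \ref{D_Hakeint}, since $H = F$ on $\mathcal{I}_{G}$. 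The Hake-function property of $F$ is immediate from $H(I) = F(I)$ on $\mathcal{I}_{G}$ together with the displayed identity for $H$ in $(ii)$, and the $\mathcal{HK}$-negligible variation of $F$ outside of $G^{o}$ follows from Lemma \ref{L2.55} together with the same identity.

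The proof contains no essential new obstacle beyond what already appears in Theorem \ref{T2.1}; the only point to keep in mind is that in the Henstock-Kurzweil setting every tag must lie in its interval, so one has to check that each of the three subcollections obtained by splitting a $\delta_{J}$-fine $\mathcal{HK}$-partition of $J$ is again a genuine $\mathcal{HK}$-partition. This is automatic, because the subcollections retain the tags and intervals of the original partition, and the requirement $B_{m}(t,\delta_{0}(t)) \subset G^{o}$ for $t \in G^{o}$ guarantees that the piece tagged in $G^{o}$ is an $\mathcal{HK}$-partition in $G^{o}$. Hence the argument of Theorem \ref{T2.1} transfers essentially verbatim.
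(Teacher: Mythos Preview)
Your proposal is correct and follows exactly the approach indicated in the paper, which simply states that the proof of Theorem~\ref{T2.1} carries over using Lemma~\ref{L2.3} for $(i)\Rightarrow(ii)$ and Lemmas~\ref{L22.4} and~\ref{L2.55} for $(ii)\Rightarrow(i)$. Your added remark about verifying that the three subcollections remain genuine $\mathcal{HK}$-partitions is a welcome clarification, but otherwise the argument is identical to the paper's.
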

\begin{proof}
In the same manner as the proof of Theorem \ref{T2.1}, 
by using Lemma \ref{L2.3}, it can be proved that $(i) \Rightarrow (ii)$ 
and, by using Lemmas \ref{L22.4} and \ref{L2.55},  
it can be proved that $(ii) \Rightarrow (i)$.
\end{proof}

\bibliographystyle{plain}

\end{document}